\theoremstyle{plain}
\theoremstyle{definition}
\newtheorem{theorem}{Theorem}[section]
\newtheorem{definition}[theorem]{Definition}
\newtheorem{example}[theorem]{Example}
\newtheorem{properties}[theorem]{Properties}
\newtheorem{proposition}[theorem]{Proposition}
\newtheorem{remark}[theorem]{Remark}
\numberwithin{equation}{section}
\newcommand{\norm}[1]{\left\Vert#1\right\Vert}
\newcommand{\abs}[1]{\left\vert#1\right\vert}
\newcommand{\A}{\mathbb A}
\newcommand{\Ann}{\mathrm{Ann}}
\newcommand{\AP}{\mathrm{C_{ap}}}
\newcommand{\C}{\mathbb{C}}
\newcommand{\Co}{\mathrm{C}}
\newcommand{\Char}{\mathrm{Char}}
\newcommand{\dR}{\Char(\R)}
\newcommand{\Hull}{\mathrm{Hull}}
\newcommand{\dZz}{\Char(\Zz)}
\newcommand{\Fphi}{\widehat{\Phi}}
\newcommand{\Sphi}{\overline{\Phi}}
\newcommand{\sphi}{\overline{\phi}}
\newcommand{\Hom}{\mathrm{Hom}}
\newcommand{\Lo}{\mathcal{L}_{0}}
\newcommand{\Ll}{\mathcal{L}}
\newcommand{\LP}{\mathrm{C_{lp}}}
\newcommand{\M}{\mathcal{M}}
\newcommand{\N}{\mathbb{N}}
\newcommand{\Orb}{\mathfrak O}
\newcommand{\Q}{\mathbb{Q}}
\newcommand{\R}{\mathbb{R}}
\newcommand{\Ss}{\mathsf{S}}
\newcommand{\UT}{\mathbb{T}}
\newcommand{\UC}{\mathbb{S}^{1}}
\newcommand{\Z}{\mathbb{Z}}
\newcommand{\Zz}{\widehat{\mathbb{Z}}}
\newcommand{\eps}{\epsilon}
\newcommand{\To}{\longrightarrow}
\newcommand{\mTo}{\longmapsto}
\date{\today}
\begin{document}

\title[Bohr -- Fourier Series on Solenoids]{Bohr -- Fourier Series on Solenoids via its Transversal Variation}
\author[M. Cruz -- L\'opez]{Manuel Cruz -- L\'opez*}
\address{*Departamento de Matem\'aticas
Universidad de Guanajuato, Jalisco s/n, Mineral de Valenciana,
Guanajuato, Gto. 36240 M\'exico.}
\email{manuelcl@ugto.mx}
\author[F. J. L\'opez -- Hern\'andez]{ Francisco J. L\'opez -- Hern\'andez**}
\address{**Instituto de F\'isica, 
Universidad Aut\'onoma de San Luis Potos\'i, 
Av. Manuel Nava No. 6, Zona Universitaria,
San Luis Potos\'i, SLP. 78290 M\'exico.}
\email{flopez@ifisica.uaslp.mx}

\begin{abstract}

The Bohr -- Fourier series development on one dimensional solenoids is analyzed by using invariant functions and extending Bohr's theory through the study of transversal variation. 
 
\end{abstract}

\subjclass[2010]{Primary: 22XX, 43XX, Secondary: 22Cxx, 43Axx}
\keywords{solenoids, Fourier series, tranversal variation}
\maketitle

\section[Introduction]{Introduction}
\label{introduction}

Since J. Fourier's introduction of his \emph{M\'emoire sur la propagation de la chaleur dans les corps solides} in 1807, the theory of what is now called Fourier Analysis shown its importance, not only from the purely theoretical point of view but also from the viewpoint of its applications. Many important mathematicians contributed greatly to the theory and in the mid 1930s the theory took a great impulse with the development of the theory of topological groups by A. Weil, L. Pontryagin, J. von Neumann, van Dantzing, among others. 

From there on, the theory is called \emph{Abstract Harmonic Analysis} and it has been developed particularly in several topological groups. The long complete description made by E. Hewitt and K.A. Ross in \cite{HR1,HR2} shows the importance of the subject.  

In this article, the Fourier analysis on one dimensional solenoids is done by following the path traced by H. Bohr in his celebrated theory of \emph{Almost periodic functions} (see \cite{Bohr}). By character theory, one dimensional solenoids are dual groups of additive subgroups of the group of rational numbers $\Q$ with the discrete topology, and hence they are homomorphic images of the so called \textsf{universal one dimensional solenoid} $\Ss$ which is the dual group of $\Q$. The group $\Ss$ is a compact abelian topological group and also has a structure of a one dimensional foliated space. 

By considering the properly discontinuously free action of $\Z$ on $\R\times \Zz$ given by
\[ \gamma\cdot (x,t) := (x+\gamma,t-\gamma) \quad (\gamma\in \Z), \]
the group $\Ss$ appears as the orbit space of this action, i.e. $\Ss=\R\times_{\Z} \Zz$. Here, $\Z$ is acting on $\R$ by covering transformations and on $\Zz$ by right translations. The group 
$\displaystyle{\Zz := \varprojlim_n \Z/n\Z}$ is the profinite completion of $\Z$, which is a compact abelian topological group, and also perfect and totally disconnected, and hence it is homeomorphic to the Cantor set. Being $\Zz$ the profinite completion of $\Z$, it admits a canonical inclusion of $\Z$ whose image is dense.

As a topological group, $\Ss$ is also isomorphic to the projective limit
\[ \Ss \cong \varprojlim_n \{ \UC, p_{nm} \} \]
with canonical projection $\Ss\To \UC$, determined by projection onto the first coordinate, which gives a locally trivial $\Zz$ -- bundle structure $\Zz\hookrightarrow \Ss \To \UC$. 

In the classical theory over the circle, it is very well known that there exists a one to one correspondance between the set 
\[ \{ \Z - \text{invariant functions } \R\To \C \} \] 
and  
\[ \{ \text{Continuous functions } \UC\To \C \}, \] 
via the universal covering projection $\pi:\R\To \UC$. By using the `covering' projection 
$\R\times \Zz\To \Ss$, it is established an analogous one to one correspondance between
\[ \{ \Z - \text{invariant functions } \R\times \Zz\To \C \} \]
and
\[ \{ \text{Continuous functions } \Ss\To \C \}. \]   

This is the starting point for the development of the theory in this article. Once this context is settled, the inspiration is Bohr's treatment. The notion of the mean value is introduced for this class of functions (see Section \ref{solenoidal_BohrFourier-series}): for any $\Z$ -- invariant functions 
$\Phi:\R\times \Zz\To \C$,
\[ \M(\Phi) := \lim_{T\to \infty}  \frac{1}{T} \int_{\Zz} \int_0^T \Phi(x,t) dx dt, \]
whenever this limit exists. Using this mean value, the Bohr -- Fourier transform of any such function 
$\Phi$ is: 
\[ 
\Fphi(\chi_{\lambda,\varrho}) := 
\M \big( \Phi(x,t) \overline{\chi_{\lambda,\varrho}(x,t)} \big), 
\] 
where $\chi_{\lambda,\varrho}=\chi_\lambda\cdot \chi_\varrho$ is any character of the product 
$\R\times \Zz$ identified with $\R\times \Q/\Z$ by duality. 

Now, to any transversal variable $t\in \Zz$ there corresponds a limit periodic function 
$\Phi_t:\R\To \C$. For $t=0$, the function $\Phi_0$ is the corresponding function on the base leaf 
$\Lo=\R\times \{0\}$. If $\Fphi_0(\chi_\lambda)=M(\Phi_0 \cdot\overline{\chi_\lambda})$ is the classical Bohr -- Fourier coefficient of $\Phi_0$, the continuous variation $\Zz\To \AP(\R)$ implies:
\smallskip

\textbf{Theorem \ref{Bohr-Fourier_coefficients}:}
$$
\Fphi(\chi_{\lambda,\varrho}) = 
\Fphi_0(\chi_\lambda) \cdot \int_{\Zz} A_{\lambda}(t)\cdot \overline{\chi_{\varrho}(t)} dt,
$$
where $A_{\lambda} : \Zz\To \UT$ defines a character on $\Zz$ determined by the transversal variation. 
\smallskip

The fact that characters are an orthonormal system provides the relation 
\[ \Fphi(\chi_{\lambda,\varrho}) = \Fphi_0(\chi_\lambda), \] 
when $\varrho=\lambda \mod \Z$, and $0$ in other case.
\smallskip

The Bohr -- Fourier series of $\Phi$ can now be written as:
$$ 
\Sphi(x,t) = 
\sum_{(\lambda,\varrho)\in \Omega_\Phi} 
\Fphi(\lambda,\varrho) \chi_{\lambda,\varrho}(x,t), 
$$
where $\Omega_\Phi$ is a countable subset of characters $\R\times \Q/\Z$. 

Denote by $\Co(\Ss)$ the set consisting of all $\Z$ -- invariant functions $\Phi:\R\times \Zz\To \C$.
The Parseval's identity is established as:
\smallskip

\noindent \textsf{Theorem \ref{solenoidal_parseval-identity}:}
For any $\Phi\in \Co(\Ss)$, 
\[ \sum_{(\lambda,\varrho)\in \Omega_\Phi} \abs{\Fphi(\lambda,\varrho)}^2 = \M(\abs{\Phi}^2). \]
\smallskip

The Approximation theorem follows:
\smallskip

\noindent \textbf{Theorem \ref{solenoidal_approximation-theorem}:}
Any $\Phi\in \Co(\Ss)$ can be approximated arbitrarily by finite terms of its Fourier series. 
\smallskip

It is important to point out that several attempts have been made to describe the theory of Fourier series on solenoids, most notably the work \cite{HRit}, where the authors used characters on solenoids composed with trigonometric polynomials on the circle to describe the theory. As has already been said, the approach in this article is to use the theory of $\Z$ -- invariant functions on the covering 
$\R\times \Zz$ which descend to the appropriate functions on $\Ss$ plus to follow the line of ideas of Bohr's treatment in order to be able to introduce the notion of mean value and the description of frequencies to form the Fourier series.

Further development of the theory presented here goes in two different directions: one the one hand, the full generalization to the $L^p$ theory would be able, and, on the other, the extension of these ideas to the so called Sullivan's solenoidal manifolds is propitious, since, according to Sullivan (see \cite{Sul} and \cite{Ver}), any compact one dimensional orientable solenoidal manifold is the suspension of a homeomorphism of the Cantor set. The universal solenoid itself is precisely one instance of this construction. All these themes are the subject of recent investigations. 

Section \ref{universal_solenoid} presents the relevant definitions on solenoids, characters, measures and all that. In Section \ref{Bohr_theory} there is a brief account of the most relevant facts to this article of the classical Bohr's theory. The Section \ref{solenoidal_theory} is dedicated to the description of basic ingredients of the solenoidal theory and in Section \ref{solenoidal_BohrFourier-series}, the Bohr -- Fourier series is described and compared with the classical Fourier series on $\Ss$, reminiscent of the series on an arbitrary compact abelian group. 

\section[The universal solenoid]{The universal solenoid}
\label{universal_solenoid}

This section introduces the basic objects relevant to this article: the universal solenoid exhibited as an orbit space, as a projective limit and also as a quotient group, the basic definitions and examples of duality theory, and the required elements of measure theory. A complete account of much of these concepts and properties is documented in the treatise \cite{HR1}. More specific descriptions of most of the objects presented here can be consulted in the recent article \cite{CLV}. 

\subsection[The universal solenoid]{The universal solenoid}
\label{solenoid}

For every integer $n\geq 1$, by covering space theory, it is defined the unbranched covering space of degree $n$, $p_n:\UC \To \UC$ given by $z\mTo z^n$. If $n,m\in \Z^+$ and $n$ divides $m$, then there exists a unique covering map 
$p_{nm}:\UC\To \UC$ such that $p_n \circ p_{nm} = p_m$.

This determines a projective system of covering spaces $\{\UC,p_n\}_{n\geq 1}$ whose projective limit is the \textsf{universal one dimensional solenoid}
\[ \Ss := \varprojlim_n \{ \UC, p_{nm} \} \]
with canonical projection $\Ss\To \UC$, determined by projection onto the first coordinate, which produces a locally trivial $\Zz$ -- bundle structure 
$\Zz\hookrightarrow \Ss \To \UC$, where $\Zz := \displaystyle{\varprojlim_n \Z/m\Z}$ is the profinite completion of $\Z$, which is a compact, perfect and totally disconnected abelian topological group homeomorphic to the Cantor set. The image of the inclusion 
$\Z\hookrightarrow \Zz$ is dense.

By considering the properly discontinuously free action of $\Z$ on $\R\times \Zz$  given by
\[ \gamma\cdot (x,t) := (x+\gamma,t-\gamma) \quad (\gamma\in \Z), \]
$\Ss$ is identified with the orbit space $\R\times_{\Z} \Zz \equiv \R\times \Zz / \Z$. Here, $\Z$ is acting on $\R$ by covering transformations and on $\Zz$ by translations. The pathconnected component of the identity element $0\in \Ss$ is called the \textsf{base leaf} and it is denoted by $\Lo$. Clearly, $\Lo$ is the image of $\R\times \{0\}$ under the canonical projection $\R\times \Zz\To \Ss$ and it is homeomorphic to $\R$.
\smallskip

In summary, $\Ss$ is a compact, connected, abelian topological group and also a one dimensional lamination where each ``leaf" is a simply connected one dimensional manifold, homeomorphic to the universal covering space $\R$ of $\UC$ and a typical ``transversal" is isomorphic to the Cantor group 
$\Zz$.

\subsection[Characters]{Characters}
\label{characters}

The \textsf{group of characters} or \textsf{dual group} of $\R$ is the group consisting of continuous homomorphims $\Hom_{\mathrm{cont}}(\R,\UC)$ denoted by $\dR$, and similarly define the character group of any abelian group. By the classical theory, the group of characters of a compact abelian group is a discrete abelian group, and viceversa, the character group of a discrete abelian group is a compact abelian group. Also, the character group of a product of two abelian groups is the product of the character groups. The following examples and facts are relevant for this work and they are very well known:
\begin{enumerate}[(a)]
\item $\dR \cong \R$,
\item $\dZz\cong \Q/\Z$, where $\Q/\Z$ is the group of roots of unity,
\item $\Char(\R\times \Zz)\cong \Char(\R)\times \Char(\Zz)\cong \R\times \Q/\Z$.
\end{enumerate}

The statement (c) says that any character in $\R\times \Zz$ has the form
\[ \chi_{\lambda,\varrho} = \chi_\lambda \cdot \chi_\varrho, \]
for some $\lambda\in \R$ and $\varrho\in \Q/\Z$.

An important character group for this development is
\begin{remark}
$\Char(\Ss)\cong \Q$.
\end{remark}

Classically this isomorphism is deduced from the fact that there is an isomorphism of topological groups between the solenoid $\Ss$ and the so called \emph{Ad\`ele Class Group} of the rational numbers 
$\A_\Q/\Q$, where $\A_\Q$ is the ad\`ele group of $\Q$ and $\Q\hookrightarrow \A$ is a discrete cocompact subgroup. However, for the purposes of this article it is convenient to calculate the character group of $\Ss$ in an alternative way as follows.

The solenoid $\Ss$ can also be realized as the quotient group $\R\times \Zz/\Z$, where 
$\Z$ is immersed diagonally as a discrete subgroup by
\[ \Z\hookrightarrow \R\times \Zz, \qquad n\mTo (-n,n). \]

In order to be able to compute the dual group of a quotient group, the duality theory establish an isomorphism
\[ \Char(\R\times \Zz/\Z) \cong \Ann(\Z), \]
where $\Ann(\Z)$ is the annihilator subgroup of $\Z$ in $\Char(\R\times \Zz)$. It happens that the characters in $\R\times \Zz$ which annihilates the generator $(-1,1)$ of $\Z$ in the product are precisely the characters determined by elements in $\Z\times \Q/\Z$.

By duality theory, the surjective homomorphism $\R\times \Zz\To \Ss$ induces a monomorphism between the dual groups $\Q\To \R\times \Zz$ whose image is isomorphic to the subgroup 
$\Z\times \Q/\Z$. This identification is very important in this work:

\begin{remark}
\label{frequencies_up-down}
There is a one to one correspondance between discrete abelian groups $\Q$ and $\Z\times \Q/\Z$.
\end{remark}
 
\subsection[Haar measure]{Haar measure}
\label{haar_measure}

Denote by $dx$ the usual Haar measure on $\R$ and by $dt$ the Haar measure on $\Zz$ normalized in such a way that
\[ \int_{\Zz} dt = 1. \]

So, the Haar measure on $\R\times \Zz$ is the product measure $dx\times dt=dxdt$ and it induces the normalized Haar measure $d\mu$ on $\Ss$, i.e.
\[ \int_{\Ss}\phi d\mu= \int_{\Zz}\int_{\R} \Phi  dxdt,  \]
for any lifting $\Phi:\R\times \Zz\To \C$ of $\phi:\Ss\To \C$.

\section[Classical Bohr's theory]{Classical Bohr's theory}
\label{Bohr_theory}

This section is a brief r\'esum\'e of Bohr's theory of almost periodic functions. We follow closely Bohr's seminal work \cite{Bohr}.
\smallskip

Let $\Co(\R)$ be the space of complex valued continuous functions equipped with the uniform norm. Define the action by translations of $\R$ on $\Co(\R)$ by 
\[ \R\times \Co(\R)\to \Co(\R),\quad (t,\varphi)\mTo \varphi^t = \varphi\circ R_t, \]
where $\varphi^t:\R\To \R$ is given by $\varphi^t(x) := \varphi\circ R_t(x) = \varphi(x+t)$. 

Denote by $\Orb_{\R}(\varphi)$ the orbit of $\varphi$ under this action, and by $\Hull(\varphi)$ the  closed convex hull of $\Orb_{\R}(\varphi)$ in $\Co(\R)$.

Given $\varphi\in \Co(\R)$ and $\eps>0$, the number $\tau=\tau(\eps)\in \R$ is called a \textsf{translation number} of $\varphi$ (corresponding to $\epsilon$) whenever
\[ \norm{\varphi^{\tau(\epsilon)} - \varphi}_{\infty}\leq\eps. \]

\begin{definition}
$\varphi\in \Co(\R)$ is called \textsf{almost periodic} if given $\eps>0$, there exists a relatively dense set of translation numbers of $\varphi$ corresponding to $\eps$, i.e. for all $\eps$, there exists a length $L=L(\eps)$ such that each interval of length $L$ contains at least one translation number corresponding to $\eps$. 
\end{definition}

Denote by $\AP(\R)$ the complex vector space consisting of all almost periodic functions.

\begin{example}
Any periodic function is obviously an almost periodic function.
\end{example}

Some important properties of almost periodic functions are summarized in the following:

\begin{properties}
The following properties are satisfied:
\begin{enumerate}
\item If $\varphi\in \AP(\R)$, then $\varphi$ is an uniformly continuous function.
\item The sum of almost periodic functions is an almost periodic function.
\item The uniform limit of almost periodic functions is an almost periodic function.
\end{enumerate}
\end{properties}

Since the sum of arbitrary periodic functions is an almost periodic function, particularly the trigonometric polynomials are almost periodic functions. An interesting observation is that  every function $\varphi$ which can be approximated uniformly by trigonometric polynomials is an almost periodic function (see Theorem \ref{Bohr_theorem}). 

The main interest in this article is the subspace of all limit periodic functions 
$\LP(\R)\subset \AP(\R)$ which consists of all functions $\varphi$, such that $\varphi$ is the uniform limit of periodic functions.

\begin{definition}
For every almost periodic function there exists the \textsf{mean value}
\[ M(\varphi) = \lim_{T\to \infty} \frac{1}{T} \int_{0}^{T} \varphi(x) dx. \]
\end{definition}

It is clear that if $\varphi\in \AP(\R)$ and $t\in\R$, then $\varphi^t \in \AP(\R)$, and therefore there exists $M(\varphi^t)$.

\begin{theorem}
\label{Mean_properties}
$M : \AP(\R) \To \C$ is a continuous linear functional which is invariant under right translations. That is,
\begin{enumerate}
\item $M(\varphi+\psi) = M(\varphi) + M(\psi)$, for any $\varphi,\psi\in \AP(\R)$.
\item $M(\varphi^t) = M(\varphi)$, for any $\varphi\in \AP(\R)$ and $t\in \R$.
\item If $\varphi$ is the uniform limit of a sequence $(\varphi_n)_{n\in \N}$, then
\[ M(\varphi)=\lim_{n\to \infty} M(\varphi_n). \]
\end{enumerate}
\end{theorem}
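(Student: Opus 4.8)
The plan is to derive all three statements directly from the definition $M(\varphi)=\lim_{T\to\infty}\frac{1}{T}\int_0^T\varphi(x)\,dx$, using only the facts already in hand: that this limit exists for every $\varphi\in\AP(\R)$ (by the Definition preceding the theorem), that almost periodic functions are bounded because they are uniformly continuous (Properties (1)), and that $\AP(\R)$ is a vector space closed under uniform limits (Properties (2)--(3)). None of the three parts requires a genuinely new idea; each reduces to the algebra of limits together with the boundedness of $\varphi$.

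For additivity (1), I would observe that for each fixed finite $T$ the averaging functional $\varphi\mapsto\frac{1}{T}\int_0^T\varphi\,dx$ is linear, so that
\[ \frac{1}{T}\int_0^T(\varphi+\psi)\,dx=\frac{1}{T}\int_0^T\varphi\,dx+\frac{1}{T}\int_0^T\psi\,dx. \]
Since $\varphi,\psi\in\AP(\R)$, both limits on the right exist as $T\to\infty$, and passing to the limit on both sides yields $M(\varphi+\psi)=M(\varphi)+M(\psi)$; the identical argument handles scalar multiples, so $M$ is linear.

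For translation invariance (2), the key step is the change of variables $u=x+t$, which turns $\frac{1}{T}\int_0^T\varphi(x+t)\,dx$ into $\frac{1}{T}\int_t^{T+t}\varphi(u)\,du$. I would then compare this average with $\frac{1}{T}\int_0^T\varphi(u)\,du$: their difference equals $\frac{1}{T}\big(\int_T^{T+t}\varphi(u)\,du-\int_0^t\varphi(u)\,du\big)$, and because $\varphi$ is bounded by $\norm{\varphi}_\infty$, each of these two integrals is taken over an interval of length $\abs{t}$ and hence is bounded independently of $T$. Consequently the difference is of order $1/T$ and tends to $0$, so the two limits coincide and $M(\varphi^t)=M(\varphi)$.

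For continuity (3), the crucial ingredient is the norm bound $\abs{M(g)}\le\norm{g}_\infty$ for every $g\in\AP(\R)$, obtained by taking absolute values inside the definition: $\abs{\frac{1}{T}\int_0^T g\,dx}\le\frac{1}{T}\int_0^T\abs{g}\,dx\le\norm{g}_\infty$ for all $T$, and the inequality is preserved in the limit. Applying this to $g=\varphi-\varphi_n$, together with linearity from (1), gives $\abs{M(\varphi)-M(\varphi_n)}=\abs{M(\varphi-\varphi_n)}\le\norm{\varphi-\varphi_n}_\infty$, which tends to $0$ by hypothesis. The only point demanding care throughout is the prior existence of the mean values: in (3) one must first invoke closure under uniform limits to know $\varphi\in\AP(\R)$, so that $M(\varphi)$ is defined before it can be estimated. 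This is the sole place where almost periodicity, rather than mere boundedness, is essential, and it is the step I would flag as the main (if modest) obstacle.
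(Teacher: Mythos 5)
Your three arguments are correct and are the standard ones; note, however, that the paper itself states this theorem without proof, as part of its r\'esum\'e of Bohr's classical theory with a pointer to \cite{Bohr}, so there is no in-paper argument to compare against. Your part (1) (linearity of the finite averages plus existence of both limits), part (2) (change of variables $u=x+t$ and an $O(1/T)$ estimate on the two boundary integrals of length $\abs{t}$), and part (3) (the bound $\abs{M(g)}\le \norm{g}_\infty$ applied to $g=\varphi-\varphi_n$, after first securing $\varphi\in\AP(\R)$ by closure under uniform limits) are exactly how this is proved in the classical literature. One small but genuine slip in your preamble: you justify the boundedness of $\varphi$ by saying almost periodic functions ``are bounded because they are uniformly continuous,'' but uniform continuity on $\R$ does not imply boundedness (consider $x\mapsto x$). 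The correct argument uses relative density of the translation numbers: fixing $\eps=1$ and the corresponding length $L=L(1)$, every $x\in\R$ admits a translation number $\tau$ with $x+\tau\in[0,L]$, whence $\abs{\varphi(x)}\le \sup_{[0,L]}\abs{\varphi}+1$. Since your parts (2) and (3) only ever use boundedness (and part (2) uses it essentially), you should repair this justification, but the repair is routine and the rest of the proof stands as written.
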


Now recall the concept of the Fourier series of an almost periodic function. A \textsf{normalized orthogonal system}
$\{ e^{i\lambda x} \}_{\lambda\in\R}$ satisfies
\[ M(e^{i\lambda_1 x} e^{-i\lambda_2 x}) = \delta(\lambda_1,\lambda_2) \]
where $\delta(\lambda_1,\lambda_2)=1$ if $\lambda_1=\lambda_2$ and 0 in other case. The elements of this system are called \textsf{basic elements} and this set can be identified with $\Char(\R)$.
\smallskip

Consider $\varphi\in \AP(\R)$ and $\lambda\in \R$. The function 
$\varphi(x) e^{-i\lambda x}$ is the product of an almost periodic function and a purely periodic function, so it is an almost periodic function and its mean value
$$
M(\varphi(x) e^{-i\lambda x}) = 
\lim_{T\to \infty} \frac{1}{T} \int_{0}^{T} \varphi(x) e^{-i\lambda x} dx
$$
exists.

The next theorem is of fundamental importance for the theory.

\begin{theorem}
\label{Bohr_frequencies}
The function $a(\lambda) := M(\varphi(x) e^{-i\lambda x})$ is zero for all values of 
$\lambda$ with the exception of at most an enumerable set of numbers $\lambda$.
\end{theorem}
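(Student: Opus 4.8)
The plan is to derive a Bessel-type inequality from the orthonormality of the basic elements and the positivity of the mean value functional $M$, and then to extract countability from it by a standard level-set argument.

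First I would record that $M(\abs{\varphi}^2)$ exists and is a finite nonnegative number. Since $\varphi\in \AP(\R)$ its complex conjugate $\overline{\varphi}$ is again almost periodic, and the product of two almost periodic functions is almost periodic, so $\abs{\varphi}^2=\varphi\overline{\varphi}\in \AP(\R)$ and its mean value is defined; moreover $M(\abs{\varphi}^2)\geq 0$ because it is the limit of averages of a nonnegative function. Write $C:=M(\abs{\varphi}^2)<\infty$.

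Next, I would fix an arbitrary finite collection of pairwise distinct frequencies $\lambda_1,\dots,\lambda_N\in \R$ and form the trigonometric polynomial $s_N(x)=\sum_{k=1}^N a(\lambda_k)e^{i\lambda_k x}$. Expanding $M(\abs{\varphi-s_N}^2)\geq 0$ by linearity of $M$ (Theorem \ref{Mean_properties}), using the defining relation $a(\lambda_k)=M(\varphi(x)e^{-i\lambda_k x})$ for the cross terms and the orthonormality $M(e^{i\lambda_j x}e^{-i\lambda_k x})=\delta(\lambda_j,\lambda_k)$ for the term $M(\abs{s_N}^2)$, the mixed and quadratic contributions collapse and yield the Bessel inequality
$$\sum_{k=1}^N \abs{a(\lambda_k)}^2\leq M(\abs{\varphi}^2)=C.$$
Since the bound $C$ is independent of $N$ and of the chosen frequencies, this inequality holds for every finite subset of frequencies.

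Finally I would convert this uniform bound into countability. For each integer $n\geq 1$ the set $\{\lambda\in \R:\abs{a(\lambda)}\geq 1/n\}$ can contain at most $n^2 C$ elements, for otherwise a finite subcollection of it would violate the Bessel inequality; in particular each such set is finite. The set of frequencies is therefore
$$\{\lambda\in \R: a(\lambda)\neq 0\}=\bigcup_{n\geq 1}\{\lambda\in \R:\abs{a(\lambda)}\geq 1/n\},$$
a countable union of finite sets, hence at most enumerable. The only delicate point, where care is genuinely needed, is the expansion giving the Bessel inequality: it rests on $M$ being a positive linear functional and on the exact orthonormality relation, so I would make sure both properties are firmly in place before collapsing the sum.
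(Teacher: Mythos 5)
Your argument is correct and complete: the Bessel inequality obtained from expanding $M(\abs{\varphi-s_N}^2)\geq 0$ via the orthonormality relation, followed by the level-set decomposition of $\{\lambda: a(\lambda)\neq 0\}$ into the finite sets $\{\lambda:\abs{a(\lambda)}\geq 1/n\}$, is precisely the classical proof of this theorem. The paper itself states the result without proof, as part of its r\'esum\'e of Bohr's theory with a reference to \cite{Bohr}, and the argument found there is essentially the one you give, so there is nothing to correct.
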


This theorem allows to carry the theory of Bohr -- Fourier series into the theory of almost periodic functions in the sense that it is possible to associate to an almost periodic function 
$\varphi$ its unique Bohr -- Fourier series
\[ \sum_{n\in \N} a(\lambda_n) e^{i\lambda_n x}. \]

\begin{remark} 
\label{Parseval}
The Parseval's identity holds for any almost periodic function $\varphi$:
\[ \sum_{n\in \N} \abs{a(\lambda_n)}^2 = M(\abs{\varphi}^2). \]
\end{remark}

The main result of the theory goes as follows.

\begin{theorem} \textsf{(Bohr)}
\label{Bohr_theorem}
Every almost periodic function can be uniformly approximated by finite sums
$s_N(x)=\sum_1^N a_n e^{i\lambda_n x}$. The exponents in the approximating sums $s_N(x)$ can be chosen to be precisely the Fourier exponents $\lambda_n$ of the function 
$\varphi$.
\end{theorem}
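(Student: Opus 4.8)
The plan is to establish the theorem by the \emph{Bochner--Fej\'er summation method}, the almost periodic counterpart of Fej\'er's theorem on the circle: instead of truncating the Fourier series of $\varphi$ directly (which may fail to converge), I would convolve $\varphi$ against a suitable nonnegative kernel, producing trigonometric polynomials that converge uniformly and involve only the Fourier exponents $\lambda_n$ of $\varphi$. First I would introduce a convolution on $\AP(\R)$ by
\[
(\varphi * K)(x) := M_t\big(\varphi(x-t)\,K(t)\big),
\]
which is well defined whenever $K$ is a trigonometric polynomial, by Theorem \ref{Mean_properties}. If $K(t)=\sum_m c_m e^{i\mu_m t}$, the orthonormality of the basic elements gives $(\varphi*K)(x)=\sum_m c_m\,a(\mu_m)\,e^{i\mu_m x}$, so convolution against $K$ simply multiplies each Fourier coefficient $a(\mu_m)$ by the corresponding coefficient $c_m$ of $K$.

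The crux is the choice of kernel. Fixing finitely many Fourier exponents $\lambda_1,\dots,\lambda_r$ of $\varphi$, I would select a rationally independent system $\beta_1,\dots,\beta_k$ and a common denominator $q$ so that each $\lambda_j$ is an integer combination of $\beta_1/q,\dots,\beta_k/q$, and then set
\[
K_{\mathbf N}(t):=\prod_{l=1}^{k}\Big(\sum_{\abs{m}\le N_l}\big(1-\tfrac{\abs{m}}{N_l+1}\big)\,e^{\,i m (\beta_l/q)\,t}\Big).
\]
Each factor is a one--dimensional Fej\'er kernel, so $K_{\mathbf N}\ge 0$, it is a trigonometric polynomial, and its constant term is $1$, whence $M(K_{\mathbf N})=1$. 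Expanding the product, the frequencies occurring in $K_{\mathbf N}$ lie in the group generated by the $\beta_l/q$; after convolution the terms at frequencies that are \emph{not} Fourier exponents of $\varphi$ vanish, since $a(\mu)=0$ there by Theorem \ref{Bohr_frequencies}. Hence $\sigma_{\mathbf N}:=\varphi*K_{\mathbf N}$ is a finite trigonometric sum $\sigma_{\mathbf N}(x)=\sum_n d_n^{(\mathbf N)}\,a(\lambda_n)\,e^{i\lambda_n x}$ whose exponents are among the $\lambda_n$, with weights $d_n^{(\mathbf N)}\in[0,1]$ that increase to $1$ as every $N_l\to\infty$; this yields exactly the assertion about the exponents in the last sentence of the statement.

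The uniform convergence $\sigma_{\mathbf N}\to\varphi$ is the heart of the argument and the step I expect to be the main obstacle. Using $M(K_{\mathbf N})=1$, I would write
\[
\sigma_{\mathbf N}(x)-\varphi(x)=M_t\big(K_{\mathbf N}(t)\,(\varphi(x-t)-\varphi(x))\big),
\]
and then combine the nonnegativity of $K_{\mathbf N}$ with the two defining features of an almost periodic function: $\varphi$ is uniformly continuous (Properties (1)) and, for every $\eps>0$, its translation numbers form a relatively dense set. The estimate amounts to showing that, in the mean against $K_{\mathbf N}$, the bulk of the mass sits at shifts $t$ for which $\abs{\varphi(x-t)-\varphi(x)}$ is uniformly small, while the residual mass is controlled by $2\norm{\varphi}_\infty$ times a quantity tending to $0$ with the kernel parameters. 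Making this quantitative and \emph{uniform in $x$} is delicate precisely because $\R$ is noncompact and the exponents need not be commensurable; it is here that the product structure of the Fej\'er kernel and the relative density of the translation numbers must be played against each other. Granting this estimate, choosing $\mathbf N$ large forces $\norm{\sigma_{\mathbf N}-\varphi}_\infty$ below any prescribed $\eps$, and since each $\sigma_{\mathbf N}$ is a finite sum in the exponents $\lambda_n$, the theorem follows.
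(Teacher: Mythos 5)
First, a point of comparison: the paper does not prove this theorem at all --- it is quoted as Bohr's classical approximation theorem and referred to \cite{Bohr} --- so there is no internal argument to measure yours against, and your proposal must stand on its own. The machinery you choose, Bochner--Fej\'er summation, is indeed the standard modern route to this result, and the parts you actually carry out are correct: the convolution $(\varphi*K)(x)=M_t\big(\varphi(x-t)K(t)\big)$ does multiply the coefficient of $K$ at each frequency $\mu_m$ by $a(\mu_m)$; the product of Fej\'er kernels over a rationally independent system $\beta_1/q,\dots,\beta_k/q$ is nonnegative with $M(K_{\mathbf N})=1$ (rational independence is exactly what forces the constant term of the expanded product to be $1$); and consequently $\sigma_{\mathbf N}=\varphi*K_{\mathbf N}$ is a finite trigonometric sum whose exponents lie among the Fourier exponents of $\varphi$, with weights in $[0,1]$ tending to $1$. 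That disposes of the second sentence of the statement.

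The gap is the one you yourself flag: the uniform convergence $\sigma_{\mathbf N}\to\varphi$, and the route you sketch for it is the step I would expect to fail. The Fej\'er-type ``mass concentration'' argument from the circle does not transfer to the Bohr mean on the noncompact line: $K_{\mathbf N}$ does not concentrate near $t=0$ --- being a trigonometric polynomial it is itself almost periodic and puts comparable mass near every simultaneous near-period of the $\beta_l/q$ --- so your estimate requires knowing that the $t$ where $K_{\mathbf N}$ is large are $\eps$-translation numbers of $\varphi$. But the characterization of translation numbers of $\varphi$ by Diophantine conditions on its Fourier exponents is normally \emph{derived from} the approximation theorem (or from Parseval), so as stated your plan is circular, or at least missing an independent input. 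The standard way to close the argument is different: use Bessel/Parseval (Remark \ref{Parseval}) to get $M(\abs{\sigma_{\mathbf N}-\varphi}^2)\to 0$; observe that positivity of the kernel and $M(K_{\mathbf N})=1$ give $\norm{\sigma_{\mathbf N}}_\infty\le\norm{\varphi}_\infty$ and that $\varphi$ and all the $\sigma_{\mathbf N}$ share a relatively dense set of common translation numbers; and then upgrade mean-square convergence to uniform convergence via Bochner's normality argument (precompactness of the translation orbit of $\varphi$ in $\Co(\R)$) together with the uniform continuity from Properties (1). Without that upgrade, or some substitute for it, the proof is not complete.
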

 
\section[Solenoidal Bohr -- Fourier theory]{Solenoidal Bohr -- Fourier theory}
\label{solenoidal_theory}

This section presents the main basic elements required for the development of the theory of the solenoidal Bohr -- Fourier series. First, it will be analyzed the relevant spaces of continuous functions, both on $\Ss$ and on $\R\times \Zz$, and the continuous variation of the functions with respect to the transversal variable. This allows to define the appropriate notion of mean value and to describe its transversal variation.   

\subsection[Continuous invariant functions on $\Ss$]{Continuous invariant functions on $\Ss$}
\label{continuous_functions}
 
Denote by $\LP(\R)$ the space of limit periodic functions $\R\To \C$ in the sense of Bohr. Let 
$\Co(\Ss)$ be the space of continuous functions $\phi:\Ss\To\C$. It is well known that there is a one to one correspondance between $\Co(\Ss)$ and the space $\Co_\Z(\R\times \Zz)$ of continuous function $\Phi:\R\times \Zz\To \C$ satisfying that $\Phi$ is invariant under the action of $\Z$, i.e.
\[ 
\Phi(\gamma\cdot (x,t)) = \Phi(x+\gamma,t-\gamma) = \Phi(x,t), \qquad
((x,t)\in \R\times \Zz, \gamma\in \Z).
\]
 
In order to develop the  Bohr -- Fourier theory for $\Co(\Ss)$ we will work on the space 
$\Co_\Z(\R\times \Zz)$, which, after projection provides us the Bohr -- Fourier theory of 
 $\Co(\Ss)$ described at the end of Section \ref{solenoidal_BohrFourier-series}. For now on, we will indistinguishably denote by  $\Co(\Ss)$ both spaces.

For every $t\in \Zz$, the function $\Phi_t:\R\To \C$ defined by
\[ \Phi_t(x) = \Phi(x,t) \]
is continuous. The invariant condition can be written as
\begin{equation}
\label{invariant_condition}
\Phi_{t-\gamma} (x + \gamma) = \Phi_t(x), \qquad ((x,t)\in \R\times \Zz, \gamma \in \Z).
\end{equation}

\begin{remark}
According to \cite{Lop}, Theorem 2.4, for every $t\in \Zz$, the function $\Phi_t:\R\To \C$ is limit periodic.
\end{remark}

A nice consequence of this remark is the following:

\begin{proposition}
For each $\Phi\in \Co(\Ss)$, the map
\[ \Zz\To \LP(\R), \qquad t\To \Phi_t \]
is uniformly continuous. That is, if $(t_n)_{n\geq 1}$ is a sequence of points in 
$\Z\subset \Zz$ which converges to $t\in \Zz$ in the profinite topology, then the sequence $(\Phi_{t_n})_{n\geq 1}$ in $\LP(\R)$ converges to $\Phi_t\in \LP(\R)$ in the uniform topology of $\LP(\R)$.
\end{proposition}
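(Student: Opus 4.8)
The plan is to exploit the compactness of $\Ss$, realized concretely through a compact fundamental domain for the $\Z$-action, in order to produce a \emph{single} modulus of continuity in the transversal direction that is valid uniformly in the leaf coordinate $x$. By the preceding Remark each $\Phi_t$ does land in $\LP(\R)$, so the only content is the uniform continuity of $t\mapsto\Phi_t$, where $\LP(\R)$ carries the sup norm.

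First I would record that, by definition of the uniform norm,
\[ \norm{\Phi_s - \Phi_t}_\infty = \sup_{x\in\R} \abs{\Phi(x,s) - \Phi(x,t)}, \]
so the assertion is exactly that $t\mapsto\Phi_t$ is uniformly continuous for the (translation-invariant) uniformity of the compact group $\Zz$. Since $\Zz$ is compact, such uniform continuity is equivalent to continuity and in particular implies the stated sequential version for any $t_n\to t$, including $t_n\in\Z\subset\Zz$.

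Next I would pass to the compact set $K=[0,1]\times\Zz\subset\R\times\Zz$, the closure of a fundamental domain for the action $\gamma\cdot(x,t)=(x+\gamma,t-\gamma)$. As $\Phi$ is continuous and $K$ is compact, $\Phi|_K$ is uniformly continuous. Writing the product uniformity and using that the compact group $\Zz$ admits a translation-invariant neighborhood basis at $0$, I extract the transversal modulus: given $\eps>0$ there is a neighborhood $V$ of $0\in\Zz$ such that whenever $(x,u),(x,u')\in K$ with $u-u'\in V$ one has $\abs{\Phi(x,u)-\Phi(x,u')}<\eps$ (specializing to a common first coordinate $x$). Then, using the invariance \eqref{invariant_condition}, I would fold the supremum over all of $\R$ back into $K$: for $x\in\R$ choose $\gamma=\lfloor x\rfloor\in\Z$ so that $x-\gamma\in[0,1)$, whence $\Phi(x,s)=\Phi(x-\gamma,s+\gamma)$ and $\Phi(x,t)=\Phi(x-\gamma,t+\gamma)$. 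The decisive point is that the shift preserves the transversal difference, $(s+\gamma)-(t+\gamma)=s-t$, so one neighborhood $V$ governs every $x$ at once: if $s-t\in V$, then $\abs{\Phi(x,s)-\Phi(x,t)}<\eps$ for all $x$, and taking the supremum gives $\norm{\Phi_s-\Phi_t}_\infty\le\eps$.

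The main obstacle is precisely the non-compactness of $\R$: a direct appeal to uniform continuity of $\Phi$ is unavailable because $\R\times\Zz$ is not compact. The resolution is the fundamental-domain reduction combined with the observation that translation by $\gamma\in\Z$ acts on the $\Zz$-coordinate by a continuous group translation and therefore leaves the difference $s-t$, and hence the relevant estimate, invariant. The one detail to check with care is that the uniformity employed on $\Zz$ is the translation-invariant one, which is automatic since $\Zz$ is a compact topological group, and which is exactly what legitimizes using the single neighborhood $V$ for all shifts $\gamma$.
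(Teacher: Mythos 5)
Your proof is correct, and it is worth noting that the paper itself offers no proof of this Proposition at all: it is merely announced as ``a nice consequence'' of the preceding Remark, even though the Remark (that each $\Phi_t$ is limit periodic) does not by itself yield the uniform continuity of $t\mapsto\Phi_t$. Your argument supplies exactly the missing content. The two essential points are both handled properly: (i) the reduction to the compact set $K=[0,1]\times\Zz$, where uniform continuity of $\Phi$ is available, circumvents the non-compactness of $\R\times\Zz$; and (ii) the folding step is legitimate precisely because the deck transformation $\gamma\cdot(x,t)=(x+\gamma,t-\gamma)$ shifts the transversal coordinate by a group translation, so the difference $s-t$ is preserved and a single neighborhood $V$ of $0\in\Zz$ (taken from the unique, translation-invariant uniformity of the compact group $\Zz$) controls the estimate for every $x\in\R$ simultaneously. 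The specialization to sequences $t_n\in\Z$ with $t_n\to t$ then follows since continuity into the sup-normed space $\LP(\R)$ gives $\Phi_{t_n}\to\Phi_t$ uniformly. If you wanted to streamline the write-up, you could observe that the quotient map $\R\times\Zz\to\Ss$ restricted to $K$ is a continuous surjection onto the compact space $\Ss$, so uniform continuity of $\Phi|_K$ is inherited from that of the induced $\phi\in\Co(\Ss)$; but your direct compactness argument is equally valid and more self-contained.
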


This Proposition implies 
\[ \Co(\Ss) \cong \Co(\Zz,\LP(\R)). \]

\begin{remark}
\label{translation-variation}
As a matter of notation, it is important to notice that the function $\Phi_t$ \textbf{does not correspond} exactly with the usual definition of right translations on $\Co(\R)$ which is denoted by $\Phi^t$ with $t\in \R$. This notation emphazises the dependence of $\Phi$ on the transversal variable. However, when $t=0$ in $\Zz$, the invariant condition restricted to 
$\Ll_0$ implies that
\begin{align*}
\Phi_0^s(x)	&= \Phi_0\circ R_s(x) \\
					&= \Phi_0(x+s) \\
					&= \Phi(x+s,0) \\
					&= \Phi(x+s+(-s),0-(-s)) \\
					&= \Phi(x,s)\\
                   &= \Phi_s(x),
\end{align*}
for any $s\in \Z$ and $x\in \Ll_0$. Furthermore, for any $t,s\in \Z\subset \Zz$ and 
$x\in \Ll_t$, the relation above sees as:
\begin{align*}
\Phi_t^s(x)	&= \Phi_t\circ R_s(x) \\
					&= \Phi_t(x+s) \\
					&= \Phi(x+s,t) \\
					&= \Phi(x+s+(-s),t-(-s)) \\
					&= \Phi(x,t+s)\\
                   &= \Phi_{t+s}(x).
\end{align*}

\end{remark}

\subsection[The mean value]{The mean value}
\label{solenoidal_mean-value}

For any function $\Phi\in \Co(\Ss)$, the \textsf{mean value} of $\Phi$ is given by
\[ \M(\Phi) = \lim_{T\to \infty}  \frac{1}{T} \int_{\Zz} \int_0^T \Phi(x,t) dx dt, \]
whenever this limit exists.

\begin{theorem}
\label{MeanComparison}
$\M(\Phi) = M(\Phi_t)$, for any choice of $t\in \Zz$ fixed.
\end{theorem}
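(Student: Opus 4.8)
The plan is to reduce the statement to two facts: first, that the function $t\mapsto M(\Phi_t)$ is \emph{constant} on $\Zz$, and second, that the solenoidal mean value is obtained by integrating this function against the normalized Haar measure $dt$. Granting both, the chain
\[ \M(\Phi) = \int_{\Zz} M(\Phi_t)\, dt = M(\Phi_0)\int_{\Zz} dt = M(\Phi_0) = M(\Phi_t) \]
(the last equality holding for every fixed $t$ by constancy) gives the result and, incidentally, the existence of the limit defining $\M(\Phi)$.

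For the constancy, I would first observe that $t\mapsto M(\Phi_t)$ is continuous on $\Zz$: the preceding Proposition gives that $t\mapsto \Phi_t$ is continuous from $\Zz$ into $\LP(\R)\subset \AP(\R)$, and $M$ is a continuous functional on $\AP(\R)$ by Theorem \ref{Mean_properties}, so the composition is continuous. Next, for integer transversal parameters the value is constant: by Remark \ref{translation-variation}, for $s\in \Z$ one has $\Phi_s = \Phi_0^{\,s}$, the ordinary right translate of $\Phi_0$ by $s$, whence $M(\Phi_s)=M(\Phi_0^{\,s})=M(\Phi_0)$ by the translation invariance in Theorem \ref{Mean_properties}. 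Thus $t\mapsto M(\Phi_t)$ is a continuous function on $\Zz$ that is constant on the image of $\Z$; since that image is dense in $\Zz$, the function is constant everywhere, equal to $M(\Phi_0)$.

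It remains to connect $\M(\Phi)$ with this constant. Since $\Phi$ descends to the compact group $\Ss$ it is bounded, say $\abs{\Phi}\leq C$; writing $g_T(t):=\frac{1}{T}\int_0^T \Phi(x,t)\, dx$, joint continuity of $\Phi$ makes $g_T$ continuous in $t$, and since $[0,T]\times\Zz$ has finite measure with bounded integrand, Fubini's theorem gives
\[ \frac{1}{T}\int_{\Zz}\int_0^T \Phi(x,t)\, dx\, dt = \int_{\Zz} g_T(t)\, dt. \]
For each fixed $t$ one has $g_T(t)\to M(\Phi_t)=M(\Phi_0)$ as $T\to\infty$, while $\abs{g_T(t)}\leq C$ uniformly in $t$ and $T$.

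The main point of the argument is precisely the passage of this limit through the integral, and this is the step I expect to require the most care, since $T$ is a continuous parameter and one must ensure the pointwise limit is legitimately integrated. Because $\Zz$ carries a probability measure and the integrands are dominated by the constant $C$, the dominated convergence theorem, applied along any sequence $T_n\to\infty$, resolves the difficulty and yields
\[ \lim_{T\to\infty}\int_{\Zz} g_T(t)\, dt = \int_{\Zz} M(\Phi_0)\, dt = M(\Phi_0). \]
The uniform bound coming from compactness of $\Ss$ is exactly what makes this interchange legitimate. Combining the last two displays with the constancy established above completes the proof.
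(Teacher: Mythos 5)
Your proposal is correct and follows essentially the same route as the paper: constancy of $t\mapsto M(\Phi_t)$ via translation invariance on $\Z$, continuity, and density of $\Z$ in $\Zz$, followed by integrating the constant over the probability space $\Zz$. The only difference is that you explicitly justify the interchange of the $T\to\infty$ limit with the integral over $\Zz$ (via the uniform bound and dominated convergence), a step the paper's proof simply asserts.
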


\begin{proof}
If $t\in \Z$ and $s\in\Z$, Remark \ref{translation-variation} implies that 
$\Phi_{t+s}=\Phi_t\circ R_s$. By traslation invariance of the mean value (see Theorem \ref{Mean_properties}(2)), it follows that
\[ M(\Phi_{t+s}) = M(\Phi_t\circ R_s) = M(\Phi_t) \quad (t\in \Z). \] 

Now, by what have been said before, if $(t_n)_{n\geq 1}$ is a sequence of points in 
$\Z\subset \Zz$ which converges to $t\in \Zz$ in the profinite topology, then the sequence $(\Phi_{t_n})_{n\geq 1}$ converges to $\Phi_t$. By properties of the mean value (see Theorem \ref{Mean_properties}(3)), 
$\displaystyle{M(\Phi_t)=\lim_{n\to \infty} M(\Phi_{t_n})}$. This means that for any 
$t\in \Zz$ fixed, the mean value is constant and equal to $M(\Phi_t)$ in $\Zz$. Therefore
\begin{align*}
\M(\Phi)	&= \lim_{T\to \infty}  \frac{1}{T} \int_{\Zz} \int_0^T \Phi(x,t) dx dt \\
				&= \int_{\Zz}  M(\Phi_t) dt \\
				&= M(\Phi_t).
\end{align*}
\end{proof}

\begin{theorem}
The invariant mean $\M : \Co(\Ss)\To \C$ is a continuous linear functional which is invariant under right translations. That is,
\begin{enumerate}
\item $\M(\Phi+\Psi) = \M(\Phi) + \M(\Psi)$, for any $\Phi,\Psi\in \Co(\Ss)$.
\item $\M(\Phi\circ R_s) = \M(\Phi)$, for any $\Phi\in \Co(\Ss)$ and $s\in \R$.
\item If $\Phi$ is the uniform limit of a sequence $(\Phi_n)_{n\in \N}$, then
\[ \M(\Phi) = \lim_{n\to \infty} \M(\Phi_n). \]
\end{enumerate}
\end{theorem}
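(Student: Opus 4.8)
The plan is to reduce each of the three assertions to the corresponding property of the classical Bohr mean $M$ on $\AP(\R)$ by means of Theorem \ref{MeanComparison}, which states that $\M(\Phi) = M(\Phi_t)$ for any fixed $t \in \Zz$. Thus every item will follow from the matching item of Theorem \ref{Mean_properties}, once two routine facts are checked: that the relevant operation (sum, right translation, uniform limit) keeps us inside $\Co(\Ss)$, so that the means in question exist by Theorem \ref{MeanComparison}, and that passing to the transversal slice at a fixed $t$ commutes with that operation, producing exactly the corresponding operation on $\AP(\R)$. The $\Z$-invariance and continuity required to remain in $\Co(\Ss)$ are preserved under sums and uniform limits, so $\M(\Phi + \Psi)$ and the limit mean are well defined.

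For (1), I would first note that for fixed $t$ the slice of a sum is the sum of slices, $(\Phi + \Psi)_t = \Phi_t + \Psi_t$, directly from $\Phi_t(x) = \Phi(x,t)$. Applying Theorem \ref{MeanComparison} and then additivity of $M$ (Theorem \ref{Mean_properties}(1)) gives $\M(\Phi + \Psi) = M(\Phi_t + \Psi_t) = M(\Phi_t) + M(\Psi_t) = \M(\Phi) + \M(\Psi)$; scalar homogeneity $\M(c\Phi) = c\M(\Phi)$ follows identically, completing linearity.

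For (2) --- which I expect to require the most care --- I would first verify that $\Phi \circ R_s$ is again $\Z$-invariant, hence lies in $\Co(\Ss)$. Writing $(\Phi \circ R_s)(x,t) = \Phi(x+s,t)$ and invoking the invariance of $\Phi$ at the point $(x+s,t)$ yields $(\Phi \circ R_s)(x+\gamma, t-\gamma) = \Phi(x+s+\gamma, t-\gamma) = \Phi(x+s, t) = (\Phi \circ R_s)(x,t)$ for every $\gamma \in \Z$. The key identification is then that the transversal slice of $\Phi \circ R_s$ at a fixed $t$ is precisely the classical right translate of $\Phi_t$, namely $(\Phi \circ R_s)_t(x) = \Phi(x+s,t) = \Phi_t(x+s) = \Phi_t^s(x)$. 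Combining this with Theorem \ref{MeanComparison} and the invariance of the classical mean under real shifts $s \in \R$ (Theorem \ref{Mean_properties}(2)) gives $\M(\Phi \circ R_s) = M(\Phi_t^s) = M(\Phi_t) = \M(\Phi)$.

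Finally, for (3), I would observe that uniform convergence $\Phi_n \to \Phi$ on $\R \times \Zz$ restricts to uniform convergence $(\Phi_n)_t \to \Phi_t$ on each slice $\R \times \{t\}$, since the sup-norm over the slice is dominated by the sup-norm over the whole product. Each $(\Phi_n)_t$ and $\Phi_t$ lie in $\LP(\R) \subset \AP(\R)$, so Theorem \ref{Mean_properties}(3) applies and, through Theorem \ref{MeanComparison}, yields $\M(\Phi) = M(\Phi_t) = \lim_{n\to\infty} M((\Phi_n)_t) = \lim_{n\to\infty} \M(\Phi_n)$. The only genuine obstacle in the argument is the bookkeeping in (2): one must confirm simultaneously that leaf-direction translation preserves $\Z$-invariance and that it descends to ordinary translation on the slice, which is exactly the sort of distinction between transversal slicing and leaf-direction translation emphasized in Remark \ref{translation-variation}.
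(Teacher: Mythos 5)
The paper states this theorem without any proof, so there is no argument of its own to compare against; your write-up supplies exactly the reduction the paper's arrangement suggests, namely invoking Theorem \ref{MeanComparison} to replace $\M$ by the classical mean $M(\Phi_t)$ on a fixed slice and then citing the corresponding items of Theorem \ref{Mean_properties}. Your verifications --- that sums, leaf-direction translates and uniform limits stay in $\Co(\Ss)$, that slicing commutes with each operation (in particular $(\Phi\circ R_s)_t=\Phi_t^s$), and that uniform convergence on $\R\times\Zz$ restricts to uniform convergence on each slice --- are all correct, so the proposal is a complete and faithful filling-in of the omitted proof.
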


\subsection[Bohr -- Fourier transform]{Bohr -- Fourier transform}
\label{solenoidal_Bohr-Fourier_transform}

Given any function $\Phi\in \Co(\Ss)$ and any character 
$\chi_{\lambda,\varrho} \in \Char(\R\times \Zz)$, the Fourier transform of $\Phi$ in the \textsf{mean sense} is given by
$$ 
\Fphi(\chi_{\lambda,\varrho}) = 
\M \big( \Phi(x,t) \overline{\chi_{\lambda,\varrho}(x,t)} \big) = 
\lim_{T\to \infty} \frac{1}{T} \int_{\Zz} \int_0^T \Phi(x,t) \overline{\chi_{\lambda,\varrho}(x,t)} dx dt.
$$

In fact,
\begin{theorem}
\label{transform_expression}
If $\Phi$ is any function in $\Co(\Ss)$ and $\chi_{\lambda,\varrho}$ is any element in 
$\Char(\R\times \Zz)$, then
\[ 
\Fphi(\chi_{\lambda,\varrho}) = \int_{\Zz} M(\Phi_t e^{-i\lambda x})\overline{\chi_{\varrho}(t)} dt. 
\]
\end{theorem}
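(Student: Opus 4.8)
The plan is to reduce the claimed identity to an interchange of the limit $T\to\infty$ with integration over the compact transversal $\Zz$. First I would use the factorization of characters from Section \ref{characters}: since $\chi_{\lambda,\varrho}=\chi_\lambda\cdot\chi_\varrho$ with $\chi_\lambda(x)=e^{i\lambda x}$ and $\chi_\varrho$ a character of $\Zz$, the conjugate splits as $\overline{\chi_{\lambda,\varrho}(x,t)}=e^{-i\lambda x}\,\overline{\chi_\varrho(t)}$. Substituting this into the definition of $\Fphi(\chi_{\lambda,\varrho})$ and applying Fubini to the inner finite double integral, the transform becomes
\[
\Fphi(\chi_{\lambda,\varrho})=\lim_{T\to\infty}\int_{\Zz} F_T(t)\,\overline{\chi_\varrho(t)}\,dt,
\qquad
F_T(t):=\frac{1}{T}\int_0^T \Phi(x,t)\,e^{-i\lambda x}\,dx.
\]
The target integral is $\int_{\Zz} F(t)\,\overline{\chi_\varrho(t)}\,dt$ with $F(t):=M(\Phi_t\,e^{-i\lambda x})$, so the theorem is exactly the assertion that the limit may be pulled inside the $\Zz$-integral.

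The crucial step is to prove that $F_T\to F$ \emph{uniformly} on $\Zz$. Here I would invoke the Proposition of Section \ref{continuous_functions}, which gives that $t\mapsto\Phi_t$ is uniformly continuous from $\Zz$ into $\LP(\R)\subset\AP(\R)$; multiplying by the fixed periodic factor $e^{-i\lambda x}$, the family $g_t:=\Phi_t\,e^{-i\lambda x}$ is the continuous image of the compact group $\Zz$, hence a compact subset of $\AP(\R)$. Two elementary bounds drive the argument: since both $F_T(t)$ and $F(t)=M(g_t)$ are averages, one has $\abs{F_T(t)-F_T(t')}\le\norm{g_t-g_{t'}}_\infty$ and $\abs{F(t)-F(t')}\le\norm{g_t-g_{t'}}_\infty$, the latter by continuity of the classical mean (Theorem \ref{Mean_properties}).

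Given $\eps>0$, compactness of $\{g_t\}_{t\in\Zz}$ yields a finite $\eps/3$-net $g_{t_1},\dots,g_{t_k}$ in the uniform norm; for each node the pointwise convergence $F_T(t_j)\to F(t_j)$ provides a single $T_0$ beyond which $\abs{F_T(t_j)-F(t_j)}<\eps/3$ for all $j$. A standard three-$\eps$ estimate, inserting the nearest node $t_j$ to a given $t$ and using the two bounds above, then gives $\abs{F_T(t)-F(t)}<\eps$ for every $t\in\Zz$ once $T\ge T_0$, which is the desired uniform convergence. Since $\abs{\overline{\chi_\varrho(t)}}=1$ and $\Zz$ carries a probability measure, uniform convergence lets me exchange the limit and the integral, yielding the identity.

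The main obstacle I anticipate is precisely this uniformity in the transversal variable: the classical Bohr mean value exists pointwise for each $g_t$, but the theorem needs the averages to converge at a rate independent of $t$. The compactness of $\Zz$ together with the continuity of $t\mapsto\Phi_t$ established earlier is exactly what upgrades pointwise convergence into uniform convergence, and this passage is the hinge of the whole proof.
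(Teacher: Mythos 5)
Your proposal follows essentially the same route as the paper: factor the character as $\overline{\chi_{\lambda,\varrho}(x,t)}=e^{-i\lambda x}\,\overline{\chi_\varrho(t)}$ and then pull the limit $T\to\infty$ inside the integral over $\Zz$. The one difference is that the paper performs this interchange without comment, whereas you actually justify it — showing via the compactness of $\{\Phi_t e^{-i\lambda x}\}_{t\in\Zz}$ in the uniform norm, the contractivity of the averages and of the mean, and a three-$\eps$ net argument that $\frac{1}{T}\int_0^T \Phi_t(x)e^{-i\lambda x}\,dx$ converges to $M(\Phi_t e^{-i\lambda x})$ uniformly in $t$. Your argument is correct and in fact supplies the missing step that makes the paper's chain of equalities rigorous.
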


\begin{proof}
\begin{align*}
\Fphi(\chi_{\lambda,\varrho})	
&= \lim_{T\to \infty}  \frac{1}{T} \int_{\Zz} \int_0^T \Phi(x,t) \overline{\chi_{\lambda,\varrho}(x,t)} dx dt \\
&= \lim_{T\to \infty}  \frac{1}{T} \int_{\Zz} \int_0^T \Phi(x,t) \overline{\chi_{\lambda}(x)} \overline{\chi_{\varrho}(t)} dx dt \\
&= \lim_{T\to \infty}  \frac{1}{T} \int_{\Zz} \int_0^T \Phi(x,t) e^{-i\lambda x}
\overline{\chi_{\varrho}(t)} dx dt \\
&= \int_{\Zz} \lim_{T\to \infty}\frac{1}{T}\int_0^T \Phi_t(x) e^{-i\lambda x} dx \cdot\overline{\chi_{\varrho}(t)} dt \\
&= \int_{\Zz} M(\Phi_t e^{-i\lambda x})\overline{\chi_{\varrho}(t)} dt.
\end{align*}
\end{proof}

Since $\Phi_t$ is limit periodic for all $t\in \Zz$, $\Hull(\Phi_t)$ is a quotient group of the solenoid (see \cite{Lop}, Theorem 2.2). By duality, $\Char(\Hull(\Phi_t))$ is a subgroup of the group 
$\Char(\Ss)\cong \Q$. 

\begin{remark}
\label{rational_frequencies}
The function $M(\Phi_{t} e^{-i\lambda x})$ is zero for all values of $\lambda$ with the exception of at most an enumerable subset $\Omega_{\Phi_t}$ of $\Q$.
\end{remark}

Theorem \ref{transform_expression} tells us that the study of the variation of 
$M(\Phi_t e^{-i\lambda x})$ with respect to the transversal variable $t$ must be done. The following discussion deals with this issue.
\smallskip

First, fix $t=0$, the identity element in $\Zz$. The function $\Phi_0\in \LP(\R)$ is a limit periodic function defined on the base leaf $\Lo = \R\times \{0\}\subset \R\times \Zz$. According to Bohr's theory: 
\begin{enumerate}[$\bullet$]
\item The frequency module of $\Phi_0$ is a countable subset of rational numbers 
$\Omega_{\Phi_0}\subset \R$, 
\item the invariant mean $M(\Phi_0)$ defined as
\[ M(\Phi_0) = \lim_{T\to \infty} \frac{1}{T} \int_0^T \Phi_0(x) dx \]
exists, and,
\item the $\lambda^{th}$ Fourier coefficient of $\Phi_0$,
$$
\Fphi_0(\lambda) = M(\Phi_0(x)e^{-i\lambda x}) =
\lim_{T\to \infty} \frac{1}{T} \int_0^T \Phi_0(x) e^{-i\lambda x} dx 
$$
is well defined. 
\end{enumerate}

\begin{remark}
Using the fact that $\R$ is selfdual, sometimes we will also write
$$ 
\Fphi_0(\chi_\lambda) = M(\Phi_0(x) \overline{\chi_\lambda(x)}) =
\lim_{T\to \infty} \frac{1}{T} \int_0^T \Phi_0(x) \overline{\chi_\lambda(x)} dx, 
$$
emphasizing the use of the character $\chi_\lambda \in \dR$ associated with $\lambda$. 
\end{remark}

The Fourier series of $\Phi_0$ is written as
\[ \Phi_0(x) = \sum_{\lambda \in \Omega_{\Phi_0}} \Fphi_0(\lambda) \chi_\lambda(x) =
\sum_{\lambda \in \Omega_{\Phi_0}} \Fphi_0(\lambda) e^{i\lambda x}. \]

\begin{theorem}
\label{transversal-variation}
If $\Phi\in\Co(\Ss)$ then  
\[ M(\Phi_t e^{-i\lambda x}) = A_{\lambda}(t) M(\Phi_0(x)e^{-i\lambda x}), \]
where $A_{\lambda} : \Zz\To \UT$ is a continuous function.
\end{theorem}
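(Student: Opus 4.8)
The plan is to show that the transversal Fourier coefficient $M(\Phi_t e^{-i\lambda x})$ differs from $M(\Phi_0 e^{-i\lambda x})$ only by a unimodular phase depending continuously on $t$. Write $a_t(\lambda) := M(\Phi_t e^{-i\lambda x})$, so that $a_0(\lambda) = \Fphi_0(\lambda)$. The two ingredients are an algebraic transformation rule for $a_t(\lambda)$ under integer translations, coming from the $\Z$-invariance of $\Phi$, and the continuous dependence of $a_t(\lambda)$ on $t\in\Zz$, coming from the continuous-variation Proposition together with the continuity of the classical mean.

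First I would record the transformation rule on the dense subgroup $\Z\subset\Zz$. Fix $s\in\Z$ and set $\psi_\lambda(x):=\Phi_0(x)e^{-i\lambda x}$. By Remark \ref{translation-variation} we have $\Phi_s=\Phi_0\circ R_s$, hence $a_s(\lambda)=M\!\big(\Phi_0(x+s)e^{-i\lambda x}\big)$. Factoring $e^{-i\lambda x}=e^{i\lambda s}e^{-i\lambda(x+s)}$ identifies the integrand as $e^{i\lambda s}\,(\psi_\lambda\circ R_s)(x)$, so by translation invariance of the mean (Theorem \ref{Mean_properties}(2)) we get $a_s(\lambda)=e^{i\lambda s}\,M(\psi_\lambda\circ R_s)=e^{i\lambda s}\,a_0(\lambda)$. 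The identical computation with $\Phi_{t+s}=\Phi_t\circ R_s$ yields $a_{t+s}(\lambda)=e^{i\lambda s}\,a_t(\lambda)$ for every $s\in\Z$ and $t\in\Zz$; in particular the modulus $\abs{a_s(\lambda)}=\abs{a_0(\lambda)}$ is constant along the $\Z$-orbit.

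Next I would establish that $t\mapsto a_t(\lambda)$ is continuous on $\Zz$. If $t_n\to t$ in $\Zz$, the continuous-variation Proposition gives $\Phi_{t_n}\to\Phi_t$ in the uniform norm of $\LP(\R)$; multiplication by the unimodular factor $e^{-i\lambda x}$ preserves uniform convergence, and then the continuity of the mean (Theorem \ref{Mean_properties}(3)) gives $a_{t_n}(\lambda)\to a_t(\lambda)$. Since $\Zz$ is the metrizable Cantor group, sequential continuity is continuity. Combining this with the previous paragraph, $\abs{a_t(\lambda)}$ is continuous and equals the constant $\abs{a_0(\lambda)}$ on the dense set $\Z$, hence equals $\abs{a_0(\lambda)}$ on all of $\Zz$: only a phase varies transversally.

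Finally I would assemble the two facts. If $a_0(\lambda)\neq 0$, set $A_\lambda(t):=a_t(\lambda)/a_0(\lambda)$; this is continuous, it satisfies $a_t(\lambda)=A_\lambda(t)\,a_0(\lambda)$ by construction, and on the dense subset $\Z$ it equals $s\mapsto e^{i\lambda s}$, which is unimodular. Continuity of $\abs{A_\lambda}$ together with density then forces $\abs{A_\lambda(t)}=1$ on all of $\Zz$, so $A_\lambda\colon\Zz\to\UT$ is the desired continuous function. If $a_0(\lambda)=0$, the transformation rule gives $a_s(\lambda)=0$ for all $s\in\Z$, and continuity plus density force $a_t(\lambda)\equiv 0$; the identity then holds trivially with $A_\lambda\equiv 1$, and indeed the only frequencies that matter are those in $\Omega_{\Phi_0}$, where $a_0(\lambda)\neq 0$. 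The point where the real content sits is exactly this passage from the $\Z$-orbit to all of $\Zz$: the magnitude of the coefficient is literally constant in $t$, and it is the continuity supplied by the earlier Proposition that guarantees the surviving phase extends from the dense orbit to a genuine unimodular continuous function on the Cantor group.
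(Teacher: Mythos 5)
Your proof is correct and follows essentially the same route as the paper: the $\Z$-invariance together with translation invariance of the mean produces the unimodular factor $e^{i\lambda s}$ on the dense subgroup $\Z\subset\Zz$, and the continuous variation $t\mapsto \Phi_t$ plus continuity of the mean extends the relation to all of $\Zz$. Your write-up is in fact a little more careful than the paper's: you define $A_\lambda(t)$ as the ratio $a_t(\lambda)/a_0(\lambda)$ and treat the degenerate case $a_0(\lambda)=0$ explicitly, whereas the paper sets $A_\lambda(t):=\lim_{n}e^{-i\lambda t_n}$ and simply asserts that this limit exists and is independent of the approximating sequence (which is only automatic for the rational frequencies that actually carry nonzero coefficients); moreover the sign $e^{i\lambda s}$ you obtain is the one the change of variables actually yields.
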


\begin{proof}
The first part of Remark \ref{translation-variation} implies that for any $t\in \Z\subset \Zz$, the identity $\Phi_0(x+t) = \Phi_t(x)$ holds for every $x\in \Lo$. The mean value of 
$\Phi_0$ is precisely invariant under these translations, i.e. $M(\Phi_0(x+t)) = M(\Phi_0(x))$. Hence,
\begin{align*}
M(\Phi_t(x)e^{-i\lambda x})
&= \lim_{T\to \infty} \frac{1}{T} \int_0^T \Phi_t(x) e^{-i\lambda x} dx \\
&= \lim_{T\to \infty} \frac{1}{T} \int_0^T \Phi_0(x+t) e^{-i\lambda x} dx \\
&= e^{-i\lambda t} \lim_{T\to \infty}\frac{1}{T} \int_0^T \Phi_0(x) e^{-i\lambda x}dx\\
&= e^{-i\lambda t} M(\Phi_0(x)e^{-i\lambda x}).
\end{align*}

This means that for any $t\in \Z\subset \Zz$, the mean value $M(\Phi_t(x)e^{-i\lambda x})$ is transformed into $e^{-i\lambda t} M(\Phi_0(x)e^{-i\lambda x})$. This calculation, together with the continuous variation can be used to determine the mean value $M(\Phi_t(x)e^{-i\lambda x})$ for any $t\in \Zz$. Chose a sequence $(t_{n})_{n\in \N}$ in 
$\Z\subset \Zz$ such that $t_n\to t$. Note that since $\Phi_{t_n}\To \Phi_t$,
\begin{align*}
M(\Phi_t(x)e^{-i\lambda x})	
&= \lim_{T\to \infty} \frac{1}{T} \int_0^T \Phi_t(x) e^{-i\lambda x} dx \\
&=\lim_{n\to \infty}\lim_{T\to \infty} \frac{1}{T} \int_0^T \Phi_{t_n}(x) e^{-i\lambda x} dx \\
&=\lim_{n\to \infty}\lim_{T\to \infty} \frac{1}{T} \int_0^T \Phi_0(x+t_n) e^{-i\lambda x} dx \\
&= \lim_{n\to \infty} e^{-i\lambda t_n} \lim_{T\to \infty} \frac{1}{T} 
\int_0^T \Phi_0(x) e^{-i\lambda x}dx\\
&= \lim_{n\to \infty} e^{-i\lambda t_{n}} M(\Phi_0(x)e^{-i\lambda x}).\\
&=  A_{\lambda}(t) M(\Phi_0(x)e^{-i\lambda x}),
\end{align*}
where
\[ A_{\lambda}(t) := \lim_{n\to \infty} e^{-i\lambda t_n} \]
exists and it does not depend on the choice of the sequence $(t_n)_{n\in \N}$. This determines a continuous function $A_{\lambda} : \Zz\To \UT$.
\end{proof}

\begin{remark}
\label{A_character} 
 Note that $A_{\lambda}$ can be written as
$$
A_{\lambda}(t) = \frac{M(\Phi_{t}(x)e^{-i\lambda x})}{M(\Phi_0(x)e^{-i\lambda x})} 
						= \frac{\Fphi_t(\chi_{\lambda}) }{ \Fphi_0(\chi_\lambda)}.
$$
The function $A_{\lambda} : \Zz\To \UT$ defines a character on $\Zz$.
\end{remark}

The results proved before, Theorem \ref{transform_expression}, Theorem 
\ref{transversal-variation} and Remark \ref{A_character}, can be used to compute the Fourier transform of any function $\Phi\in \Co(\Ss)$ in the following way: for any character 
$\chi_{\lambda,\varrho}\in \Char(\R\times \Zz)$,
\begin{align*}
\Fphi(\chi_{\lambda,\varrho}) &= 
\M \big( \Phi(x,t) \overline{\chi_{\lambda,\varrho}(x,t)} \big) \\
&= \lim_{T\to \infty} \frac{1}{T}  \int_{\Zz} \int_0^T
\Phi(x,t) \overline{\chi_{\lambda,\varrho}(x,t)} dx dt \\
&= \int_{\Zz} M(\Phi_t e^{-i\lambda x})\overline{\chi_{\varrho}(t)} dt \\
&= M(\Phi_0 \cdot\overline{\chi_\lambda}) \cdot \int_{\Zz} A_{\lambda}(t)\cdot \overline{\chi_{\varrho}(t)}.
\end{align*}

According to Remark \ref{rational_frequencies}, the mean value 
$M(\Phi_0 \cdot\overline{\chi_\lambda})$ is zero for all values of $\lambda$ with the exception of at most a countable subset of $\Q$.

The integral expression in the last equality is evaluated by integration of characters of 
$\Zz$:  
\[ \int_{\Zz} A_{\lambda}(t)\cdot \overline{\chi_{\varrho}(t)} dt = 1 \] 
if and only if $A_{\lambda}(t)=\chi_{\varrho}(t)$ and $0$ in other case. Also, $A_{\lambda}$ and $\chi_{\varrho}$ define the same character if and only if 
$\varrho=\lambda \mod \Z$.

So, the final form of the Fourier coefficient of any function $\Phi\in \Co(\Ss)$ is given in the following:

\begin{theorem}
\label{Bohr-Fourier_coefficients}
$$
\Fphi(\chi_{\lambda,\varrho}) = \Fphi_0(\chi_\lambda) \cdot 
\int_{\Zz} A_{\lambda}(t)\cdot \overline{\chi_{\varrho}(t)} dt,
$$
where $\Fphi_0(\chi_\lambda)=M(\Phi_0 \cdot\overline{\chi_\lambda})$ when 
$\varrho=\lambda \mod \Z$, and $0$ in other case.
\end{theorem}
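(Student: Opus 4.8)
The plan is to assemble the stated formula directly from the two structural results already in place, Theorem~\ref{transform_expression} and Theorem~\ref{transversal-variation}, and then to evaluate the resulting transversal integral by means of the orthogonality of characters on the compact group $\Zz$.

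First I would start from the integral representation of the transform furnished by Theorem~\ref{transform_expression},
\[
\Fphi(\chi_{\lambda,\varrho}) = \int_{\Zz} M(\Phi_t e^{-i\lambda x})\,\overline{\chi_{\varrho}(t)}\, dt,
\]
and substitute the transversal variation formula of Theorem~\ref{transversal-variation}, namely $M(\Phi_t e^{-i\lambda x}) = A_{\lambda}(t)\,M(\Phi_0(x)e^{-i\lambda x})$. Since the factor $M(\Phi_0(x)e^{-i\lambda x}) = \Fphi_0(\chi_\lambda)$ does not depend on $t$, it comes out of the integral and produces
\[
\Fphi(\chi_{\lambda,\varrho}) = \Fphi_0(\chi_\lambda)\, \int_{\Zz} A_{\lambda}(t)\,\overline{\chi_{\varrho}(t)}\, dt,
\]
which is the first assertion of the theorem.

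Next I would invoke Remark~\ref{A_character}, which identifies $A_{\lambda}$ as a character of $\Zz$. Both $A_\lambda$ and $\chi_\varrho$ then lie in the dual group $\dZz$, and the Haar measure $dt$ is normalized to total mass one (Section~\ref{haar_measure}). Appealing to the orthonormality of characters on a compact abelian group,
\[
\int_{\Zz} A_{\lambda}(t)\,\overline{\chi_{\varrho}(t)}\, dt =
\begin{cases} 1 & \text{if } A_\lambda = \chi_\varrho, \\ 0 & \text{otherwise,} \end{cases}
\]
so that, combined with the previous display, $\Fphi(\chi_{\lambda,\varrho})$ equals $\Fphi_0(\chi_\lambda)$ exactly when $A_\lambda = \chi_\varrho$ as characters, and vanishes in every other case.

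The final and most delicate step is to translate the character equality $A_\lambda = \chi_\varrho$ into the arithmetic condition $\varrho = \lambda \bmod \Z$. Here I would use the explicit description $A_\lambda(t) = \lim_n e^{-i\lambda t_n}$ with $t_n \in \Z$ and $t_n \to t$, together with the duality identification $\dZz \cong \Q/\Z$ from Section~\ref{characters}. The key observation is that $A_\lambda$ extends to a genuine continuous character of $\Zz$ precisely for the rational frequencies singled out in Remark~\ref{rational_frequencies}, and that under $\dZz \cong \Q/\Z$ this character corresponds to the residue class $\lambda \bmod \Z$. I expect this identification --- matching the limiting exponential $A_\lambda$ with the root-of-unity character $\chi_\varrho$ and checking that they coincide if and only if their parameters are congruent modulo $\Z$ --- to be the main obstacle, since it is exactly the point where the real frequency $\lambda$ meets the profinite, torsion structure of $\dZz$; the preceding steps are formal manipulations of the integral and a routine appeal to character orthogonality.
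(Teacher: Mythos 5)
Your proposal follows essentially the same route as the paper: Theorem~\ref{transform_expression} gives the transversal integral, Theorem~\ref{transversal-variation} lets the factor $\Fphi_0(\chi_\lambda)$ come out, and orthogonality of characters on $\Zz$ evaluates $\int_{\Zz} A_\lambda(t)\overline{\chi_\varrho(t)}\,dt$, with the final identification $A_\lambda=\chi_\varrho \iff \varrho=\lambda \bmod \Z$. The only difference is that you correctly flag this last identification as the delicate point and sketch how to justify it via $\dZz\cong\Q/\Z$, whereas the paper simply asserts it; your version is, if anything, slightly more careful.
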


\begin{remark}
\label{frequencies_decomposition}
As a consequence of the above theorem the mean value is zero except at most in an enumerable set $\Omega_{\Phi}\cong \Omega_{\Phi_0}$ (Compare Theorem \ref{Bohr_frequencies}). In fact, any $\lambda\in \Omega_{\Phi}$ can be written as 
$\lambda = [\lambda] + \varrho$, where $[\lambda]$ is the integer part of $\lambda$ and its fractional part $\varrho$ is such that $\varrho=\lambda \mod \Z$.
\end{remark}

\section[Solenoidal Bohr -- Fourier series]{Solenoidal Bohr -- Fourier series}
\label{solenoidal_BohrFourier-series}

This final section describes the Bohr -- Fourier series for a complex -- valued continuous function $\phi$ on the solenoid $\Ss$ through the associated continuous $\Z$ -- invariant function on $\R\times \Zz$. It also presents the solenoidal version of the Parseval's identity and the Approximation theorem. Finally, this theory is compared with the classical theory on $\Ss$ viewed as a compact abelian group.  

\subsection[Classical Fourier series on $\Ss$]{Classical Fourier series on $\Ss$}
\label{fourier-series_S}

According to the classical harmonic analysis on the compact abelian topological group 
$\Ss$, given a function $\phi:\Ss\To\C$, the Fourier series can be defined abstractly as 
\[ \sphi(z) = \sum_{q\in\Q} \widehat{\phi}(\chi_q) \chi_q(z), \]
where $\chi_q$ is the character of $\Ss$ associated to $q\in \Q$ and  
\[ \widehat{\phi}(\chi_q) = \int_{\Ss} \phi(z) \overline{\chi}_q(z) d\mu. \] 

In what follows the corresponding Bohr -- Fourier series described through the theory developed previously is done.
 
\subsection[Solenoidal Bohr -- Fourier series]{Solenoidal Bohr -- Fourier series}
\label{solenoidal_BF-series}

Denote by $\Sphi$ the Bohr -- Fourier series associated to a given function 
$\Phi\in \Co(\Ss)$.  According with Remark \ref{frequencies_decomposition}, the Bohr -- Fourier series of $\Phi$ is
$$ 
\Sphi(x,t) = 
\sum_{(\lambda,\varrho)\in \Omega_\Phi} 
\Fphi(\lambda,\varrho) \chi_{\lambda,\varrho}(x,t). 
$$

\begin{remark}
Since $\chi_{\lambda,\varrho}=\chi_\lambda\cdot \chi_\varrho$, when $t=0$, $\chi_\varrho(0)=1$ for any $\varrho$. By Theorem \ref{Bohr-Fourier_coefficients}, $\Fphi(\lambda,\varrho)=\Fphi_0(\lambda)$ and Remark \ref{frequencies_decomposition} shows that $\Omega_{\Phi}\cong \Omega_{\Phi_0}$. This allows to identify the Fourier series introduced here with the usual Bohr -- Fourier series when restricting to the base leaf $\Lo$:
$$ 
\Sphi_0(x) = \sum_{\lambda\in \Omega_{\Phi_0}} \Fphi_0(\lambda) \chi_\lambda (x). 
$$ 
\end{remark}

Following the order of ideas presented by Bohr (see \cite{Bohr}, Sections 70 and 84), the solenoidal version of the main results of Bohr's theory such as the Parseval's identity, the uniqueness theorem and the approximation theorem are now discussed.

\begin{theorem}[Parseval's identity]
\label{solenoidal_parseval-identity}
For any $\Phi\in \Co(\Ss)$ 
\[ \sum_{(\lambda,\varrho)\in \Omega_\Phi} 
\abs{\Fphi(\lambda,\varrho)}^2 = \M(\abs{\Phi}^2). \]
\end{theorem}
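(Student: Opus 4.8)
The plan is to reduce the solenoidal Parseval identity to the classical one (Remark \ref{Parseval}) applied to the base-leaf function $\Phi_0$, and then to transport the resulting mean value back to $\Ss$ via the Mean Comparison theorem (Theorem \ref{MeanComparison}). The whole argument is a chain of three equalities, so beyond the bookkeeping of index sets no genuinely new analysis is required.

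First I would collapse the double sum on the left. By Theorem \ref{Bohr-Fourier_coefficients}, the coefficient $\Fphi(\lambda,\varrho)$ vanishes unless $\varrho=\lambda \bmod \Z$, in which case it equals $\Fphi_0(\chi_\lambda)$. Hence every nonzero term of $\sum_{(\lambda,\varrho)\in \Omega_\Phi} \abs{\Fphi(\lambda,\varrho)}^2$ is indexed by a pair of the special form $(\lambda, \lambda \bmod \Z)$, and Remark \ref{frequencies_decomposition} identifies $\Omega_\Phi$ with $\Omega_{\Phi_0}$ precisely through this pairing. Consequently
\[
\sum_{(\lambda,\varrho)\in \Omega_\Phi} \abs{\Fphi(\lambda,\varrho)}^2 = \sum_{\lambda\in \Omega_{\Phi_0}} \abs{\Fphi_0(\chi_\lambda)}^2.
\]

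Next I would invoke the classical theory. The function $\Phi_0\in \LP(\R)\subset \AP(\R)$ is almost periodic with Fourier frequencies exactly $\Omega_{\Phi_0}$ and coefficients $\Fphi_0(\chi_\lambda)=M(\Phi_0 \overline{\chi_\lambda})$, so Remark \ref{Parseval} gives $\sum_{\lambda\in \Omega_{\Phi_0}} \abs{\Fphi_0(\chi_\lambda)}^2 = M(\abs{\Phi_0}^2)$. Finally, since $\abs{\Phi}^2\in \Co(\Ss)$ (being continuous and $\Z$-invariant) and its transversal slice at $t=0$ is $(\abs{\Phi}^2)_0 = \abs{\Phi_0}^2$, Theorem \ref{MeanComparison} applied to $\abs{\Phi}^2$ yields $\M(\abs{\Phi}^2) = M\big((\abs{\Phi}^2)_0\big) = M(\abs{\Phi_0}^2)$. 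Concatenating the three equalities proves the identity.

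The point requiring the most care is the index-set bookkeeping in the first step: one must verify that $\lambda \mapsto (\lambda, \lambda \bmod \Z)$ is a genuine bijection between $\Omega_{\Phi_0}$ and the support of the solenoidal coefficients, so that no frequency is counted twice or dropped. This is exactly the content of the orthonormality dichotomy in Theorem \ref{Bohr-Fourier_coefficients} (for each $\lambda$ there is precisely one $\varrho$, namely $\lambda \bmod \Z$, for which $\int_{\Zz} A_\lambda \overline{\chi_\varrho}\,dt = 1$, the remaining integrals vanishing) combined with Remark \ref{frequencies_decomposition}. A secondary point is the existence of $M(\abs{\Phi_0}^2)$ and of $\M(\abs{\Phi}^2)$; the former is guaranteed because $\abs{\Phi_0}^2$ is again almost periodic and the classical mean is defined on all of $\AP(\R)$, while the latter follows from the proof of Theorem \ref{MeanComparison}, which uses only limit periodicity of the slices and their continuous transversal variation, both valid for $\abs{\Phi}^2$.
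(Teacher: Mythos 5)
Your proposal is correct and follows essentially the same route as the paper's own proof: reduce the sum over $\Omega_\Phi$ to a sum over $\Omega_{\Phi_0}$ via Theorem \ref{Bohr-Fourier_coefficients} and Remark \ref{frequencies_decomposition}, apply the classical Parseval identity to the limit periodic function $\Phi_0$, and identify $M(\abs{\Phi_0}^2)$ with $\M(\abs{\Phi}^2)$ through Theorem \ref{MeanComparison}. The only difference is that you spell out the index-set bijection and the applicability of Theorem \ref{MeanComparison} to $\abs{\Phi}^2$ more explicitly than the paper does, which is a point in your favor rather than a deviation.
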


\begin{proof}
According with Remark \ref{frequencies_decomposition} and Theorem 
\ref{Bohr-Fourier_coefficients}, 
\[ \abs{\M(\Phi(\lambda,\varrho))}^2 = \abs{M(\Phi_0(\lambda))}^2. \]

Therefore, considering the Bohr -- Fourier series of $\Phi_0$, the classical Parseval's identity (see Theorem \ref{Parseval}) and Theorem \ref{MeanComparison} imply that 
\begin{align*}
\sum_{(\lambda,\varrho)\in \Omega_\Phi} \abs{\Fphi(\lambda,\varrho)}^2 &= \sum_{\lambda\in \Omega_{\Phi_0}} \abs{\Fphi_0(\lambda)}^2\\
&= M(\abs{\Phi_0}^2)\\
&= \M(\abs{\Phi}^2).
\end{align*}  
\end{proof}

\begin{theorem}[Uniqueness]
Any  $\Phi\in \Co(\Ss)$ is uniquely determined by its Fourier series.
\end{theorem}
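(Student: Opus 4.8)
The plan is to argue by linearity and reduce the statement to showing that the zero function is the only element of $\Co(\Ss)$ all of whose Fourier coefficients vanish. Indeed, if $\Phi,\Psi\in\Co(\Ss)$ share the same Bohr--Fourier series, then $\Phi-\Psi\in\Co(\Ss)$ and, since $\M$ is a linear functional (as recorded in the theorem following Theorem \ref{MeanComparison}), every coefficient satisfies $\widehat{\Phi-\Psi}(\chi_{\lambda,\varrho}) = \widehat{\Phi}(\chi_{\lambda,\varrho}) - \widehat{\Psi}(\chi_{\lambda,\varrho}) = 0$. Thus it suffices to prove the implication: if $\widehat{\Phi}(\chi_{\lambda,\varrho}) = 0$ for all $(\lambda,\varrho)$, then $\Phi = 0$.

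The main engine is Parseval's identity. Assuming all Fourier coefficients vanish, Theorem \ref{solenoidal_parseval-identity} gives at once $\M(\abs{\Phi}^2) = 0$. Now $\abs{\Phi}^2$ is again a continuous $\Z$-invariant function on $\R\times\Zz$, so $\abs{\Phi}^2\in\Co(\Ss)$, and its transversal slice at $t$ is exactly $\abs{\Phi_t}^2$. Applying Theorem \ref{MeanComparison} to $\abs{\Phi}^2$ therefore yields $M(\abs{\Phi_t}^2) = \M(\abs{\Phi}^2) = 0$ for every fixed $t\in\Zz$.

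It then remains to deduce that each slice $\Phi_t$ vanishes. Since $\Phi_t\in\AP(\R)$, the function $\abs{\Phi_t}^2$ is a nonnegative almost periodic function with zero mean value, and I would invoke the positive-definiteness of the classical mean to conclude $\Phi_t\equiv 0$. Concretely, one can either run the relatively-dense translation-number argument directly, or chain through classical Parseval (Remark \ref{Parseval}): the vanishing of $M(\abs{\Phi_t}^2)$ forces every classical Fourier coefficient $a(\lambda)$ of $\Phi_t$ to be zero, and then Bohr's approximation theorem (Theorem \ref{Bohr_theorem}) finishes the argument, since the approximating trigonometric sums are assembled from these coefficients and so are identically zero, whence their uniform limit $\Phi_t$ is zero as well. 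As $t\in\Zz$ was arbitrary and $\Phi(x,t) = \Phi_t(x)$, this gives $\Phi\equiv 0$.

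The main obstacle is precisely this last step: passing from the vanishing of the mean of $\abs{\Phi_t}^2$ to the vanishing of $\Phi_t$, i.e. the positive-definiteness of the mean on $\AP(\R)$. Everything preceding it is a formal rearrangement of already-established identities, whereas this step draws on genuine content from Bohr's theory. Once it is secured for each fixed transversal coordinate, the invariance correspondence $\Phi(x,t) = \Phi_t(x)$ transports the conclusion back to the solenoid with no further work.
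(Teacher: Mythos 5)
Your proof is correct and follows essentially the same route as the paper: reduce to the case of vanishing coefficients, apply the solenoidal Parseval identity to get $\M(\abs{\Phi}^2)=0$, transfer this to the leafwise means via Theorem \ref{MeanComparison}, and invoke the positive-definiteness of Bohr's classical mean on $\AP(\R)$. The only cosmetic difference is that you apply Theorem \ref{MeanComparison} to every transversal slice $\Phi_t$ directly, whereas the paper treats only $\Phi_0$ and then passes to all $t$ by the transversal continuity; both hinge on the same two ingredients.
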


\begin{proof}
Uniqueness follows from Parseval's identity as in Bohr (see \cite{Bohr} Section 71).  By Theorem \ref{MeanComparison}, if $\M(\abs{\Phi}^2)=0=M(\abs{\Phi_0}^2)$ then the second equality implies 
that $\Phi_0=0$. Finally, $\Phi_0\equiv 0$ implies $\Phi_t\equiv 0$ for every $t\in \Zz$ and therefore $\Phi\equiv 0$.
\end{proof}

\begin{remark}
As was established by Bohr, these theorems are equivalent and play a fundamental role in the development of the theory.
\end{remark}

Another implication of the theory developed here is that since any function can be approximated on the base leaf by the Fourier series in the classical sense and it coincides with the restriction of the solenoidal version, we can extend the argument to the solenoid by limits and the approximation theorem follows immediately.

\begin{theorem} [Approximation theorem]
\label{solenoidal_approximation-theorem}
Any $\Phi\in \Co(\Ss)$ can be aproximated arbitrarily by finite terms of its Fourier series. 
\end{theorem}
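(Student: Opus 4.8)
The plan is to reduce the statement to Bohr's classical approximation theorem on the base leaf and then to propagate the resulting uniform bound to every leaf of $\Ss$, exploiting the density of $\Z$ in $\Zz$ together with the continuity of the transversal variation. Fix $\eps>0$ and let $\Phi_0\in\LP(\R)$ be the restriction of $\Phi$ to the base leaf $\Lo$. Since $\Phi_0$ is limit periodic, Bohr's theorem (Theorem~\ref{Bohr_theorem}) furnishes a finite sum $s(x)=\sum_{\lambda\in F}c_\lambda e^{i\lambda x}$, with $F\subset\Omega_{\Phi_0}$ finite and exponents among the Fourier exponents of $\Phi_0$, such that $\norm{\Phi_0-s}_\infty\leq\eps$. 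Lifting each exponent $\lambda$ to the character $\chi_{\lambda,\varrho}$ of $\Ss$ with $\varrho=\lambda\bmod\Z$ (as in Remark~\ref{frequencies_decomposition}), I form
\[ S(x,t)=\sum_{\lambda\in F}c_\lambda\,\chi_{\lambda,\varrho}(x,t). \]
This $S$ is a finite combination of characters of $\Ss$, hence lies in $\Co(\Ss)$, and is built from frequencies in $\Omega_\Phi$, so it is a finite partial sum of the Fourier series of $\Phi$ in Bohr's sense. Since $\chi_\varrho(0)=1$, restricting to $\Lo$ gives $S(x,0)=s(x)$.

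The decisive step is to transfer the base-leaf estimate to the integer leaves by invariance. Because $\Phi,S\in\Co(\Ss)$, the difference $\Phi-S$ is $\Z$-invariant on $\R\times\Zz$, so for $\gamma\in\Z$ the invariance relation applied with the group element $\gamma$ yields
\[ (\Phi-S)(x,\gamma)=(\Phi-S)(x+\gamma,0)=\Phi_0(x+\gamma)-s(x+\gamma). \]
Thus the restriction of $\Phi-S$ to the leaf $\gamma$ is exactly the $\gamma$-translate of its restriction to $\Lo$, whence $\sup_x\abs{(\Phi-S)(x,\gamma)}=\norm{\Phi_0-s}_\infty\leq\eps$ for every $\gamma\in\Z$. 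In this way the single base-leaf bound already controls the error uniformly over the dense set of integer leaves, with the same constant $\eps$; note that this argument uses only the $\Z$-invariance of $\Phi-S$ and so is insensitive to the sign and normalization conventions for the characters.

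Finally I extend the bound to all $t\in\Zz$. Since $\Phi-S\in\Co(\Ss)$, the transversal variation $t\mapsto\Phi_t-S(\cdot,t)$ is continuous from $\Zz$ into $\LP(\R)$ (the Proposition of Section~\ref{continuous_functions}); composing with the norm shows that $t\mapsto\norm{\Phi_t-S(\cdot,t)}_\infty$ is continuous on $\Zz$. By the previous step this continuous function is bounded by $\eps$ on the dense subset $\Z\subset\Zz$, hence bounded by $\eps$ on all of $\Zz$. Therefore
\[ \sup_{(x,t)\in\R\times\Zz}\abs{\Phi(x,t)-S(x,t)}=\sup_{t\in\Zz}\norm{\Phi_t-S(\cdot,t)}_\infty\leq\eps, \]
and letting $\eps\to0$ completes the approximation. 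The main obstacle is precisely the passage from the base leaf to the whole solenoid: naive partial sums of a Fourier series need not converge uniformly, so one must use Bohr's Bochner--Fej\'er--type polynomials on $\Lo$ and then exploit that the approximation error, being $\Z$-invariant, is rigidly translated from leaf to leaf. This rigidity is what allows the single estimate on $\Lo$ to propagate, by continuity of the transversal variation and density of $\Z$ in $\Zz$, to a uniform estimate over all of $\Ss$.
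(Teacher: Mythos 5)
Your argument is correct and takes essentially the same route the paper intends: the paper offers only a one-sentence sketch (approximate on the base leaf classically, note the solenoidal series restricts to the classical one there, and ``extend the argument to the solenoid by limits''), and your proof is a faithful, more complete implementation of exactly that plan. The two ingredients you make explicit --- $\Z$-invariance of $\Phi-S$ to translate the base-leaf estimate rigidly to every integer leaf, and density of $\Z$ in $\Zz$ together with continuity of the transversal variation $t\mapsto \Phi_t$ to pass to all of $\Zz$ --- are precisely what the paper's ``by limits'' is silently relying on, and your caveat about using Bochner--Fej\'er--type sums rather than naive partial sums matches the paper's own statement of Bohr's theorem.
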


\subsection[Invariance of the Bohr -- Fourier series]{Invariance of the Bohr -- Fourier series}
\label{invariance_BohrFourier-series}

To conclude the analysis on the Bohr -- Fourier series it should be verify that the theory just developed descend naturally to the universal solenoid. This is done as follows. First recall that the invariance of any 
$\Phi\in \Co(\Ss)$ under the action of $\Z$, reads as:
$$\Phi_{t-\gamma} (x + \gamma) = \Phi_t(x), \qquad ((x,t)\in \R\times \Zz, \gamma \in \Z)$$

 From this expression and the definition of the Fourier coefficient follows immediately that the Bohr -- Fourier coefficients are invariant under the action of $\Z$. Hence the corresponding Fourier coefficients of the induced function $\phi$ are given by (see Section \ref{haar_measure})
\begin{align*}
\Fphi(\chi_{\lambda,\varrho})& = \lim_{T\to \infty}  \frac{1}{T} \int_{\Zz} \int_0^T \Phi(x,t) \overline{\chi_{\lambda,\varrho}(x,t)} dx dt\\
&=\int_{\Ss}\phi(z) \overline{\chi_q}(z) d\mu\\
&=\widehat{\phi}(q).
\end{align*}
where $q=\lambda+\varrho$. Finally, this allows us to `project' the Bohr -- Fourier series of any $\Z$ --invariant function $\Phi:\R\times\Zz\To \C$ to the classical Bohr -- Fourier series of a function 
$\phi:\Ss\to \C$ as:
\begin{align*}
\sphi(z)	&=
\sum_{q\in\Q} \widehat{\phi}(\chi_q) 
\chi_q(z).
\end{align*}

\end{document}